\documentclass{amsart}
\pdfoutput=1
\usepackage[utf8]{inputenc}
\usepackage[T1]{fontenc}

\usepackage{mathrsfs}
\usepackage{mathtools}
\usepackage{thmtools}

\usepackage{todonotes}

\usepackage[margin=1.5in]{geometry}
\usepackage{tikz,tikz-3dplot,tikz-cd}
\usepackage{enumitem}

\usepackage[
backend=biber,
style=numeric,
sorting=ynt
]{biblatex}

\usepackage[%
	colorlinks,%
	bookmarksdepth=3,%
	citecolor=blue%
]{hyperref}

\usepackage[capitalise]{cleveref}

\theoremstyle{plain}
\newtheorem{thm}{Theorem}[section]
\newtheorem{conjecture}[thm]{Conjecture}
\newtheorem{proposition}[thm]{Proposition}

\theoremstyle{definition}
\newtheorem{construction}[thm]{Construction}
\newtheorem{example}[thm]{Example}
\newtheorem{remark}[thm]{Remark}

\newcommand{\V}{\mathcal{V}}

\newcommand{\QQ}{\mathbb{Q}}

\newcommand{\CC}{\mathbb{C}}
\newcommand{\OO}{\mathcal{O}}
\newcommand{\PP}{\mathbb{P}}
\newcommand{\ZZ}{\mathbb{Z}}

\newcommand{\rk}{\operatorname{rk}}
\newcommand{\Cl}{\operatorname{Cl}}

\newcommand{\Pic}{\operatorname{Pic}}
\newcommand{\Hom}{\operatorname{Hom}}
\newcommand{\cone}{\operatorname{cone}}
\newcommand{\conv}{\operatorname{conv}}

\newcommand{\Nef}{\operatorname{Nef}}
\newcommand{\Eff}{\operatorname{Eff}}

\title{The Mukai conjecture via Cox rings for special toric ambient embeddings}

\author{Heath Pearson}
\address{School of Mathematical Sciences\\University of Nottingham\\ Nottingham\\NG7 2RD\\UK\\}
\email{heath.pearson@nottingham.ac.uk}
\urladdr{sites.google.com/view/heathpearson}

\subjclass{Primary: 14J45; Secondary: 14M25}
\keywords{Cox rings, Fano varieties, Mukai conjecture, Toric varieties}

\begin{document}

\begin{abstract}
    We prove the Mukai conjecture on the characterisation of products of projective spaces among Fano varieties for a class of locally factorial Fano varieties defined in terms of their Cox rings. The Fano varieties of this class are characterised in terms of the property that they admit an embedding into a smooth projective toric variety via the bunched ring theory of Mori dream spaces. Our approach inherits the Mukai conjecture for this class from a log version of the Mukai conjecture on the toric ambient embedding.
\end{abstract}

\maketitle

\section{Introduction}

The Mukai conjecture proposes a numerical characterisation of powers of projective spaces among the smooth Fano varieties, and it has been researched extensively, including recent progress from various perspectives (see for example \cite{Wisniewski1990Mukai,Bonavero2003ConjectureMukai,Andreatta2004GeneralizedMukai,Fujita2016AroundMukai,Fujita2019GeneralizedMukaiToric,Reineke2024MukaiQuiver,NefComplexity,SphericalGeneralisedMukai}), yet it remains an open question in general.

\begin{conjecture}[The Mukai conjecture {\cite[Conjecture~4]{MukaiConjecture}}]\label{mukai conjecture} Let \(X\) be a smooth Fano variety of dimension \(n\), Picard number \(\rho_X\coloneq\rk\Pic(X)\), and Fano index \(i_X\coloneq\max\{k\in\ZZ_{>0}\mid kH=[-K_X],\, H\in\Pic(X)\}\), then
\[
(i_X-1)\rho_X\le n,
\]
with \((i_X-1)\rho_X=n\) if and only if \(X\cong(\PP^{i_X-1})^{\rho_X}\).
\end{conjecture}

\Cref{mukai conjecture} has been proved in several special cases, even in the setting of non-smooth Fano varieties with controlled singularities. For example, \Cref{mukai conjecture} is proved for the toric \(\QQ\)-factorial Gorenstein Fano varieties by C. Casagrande in \cite{Casagrande2006}. Furthermore, the most general version of the Mukai conjecture stated is the \emph{generalised Mukai conjecture for log Fano pairs}, proposed by K. Fujita in \Cite{Fujita2019GeneralizedMukaiToric} where it is proved for the toric log Fano pairs. In this more general conjecture, smooth Fano varieties are replaced with not-necessarily \(\QQ\)-factorial log Fano pairs \((Z,\Delta)\), and the Fano index is exchanged for the \emph{log pseudo-index}
\[
\iota_{(Z,\Delta)}\coloneq\min\{([-(K_Z+\Delta)]\cdot\gamma)\mid\gamma\text{ is a rational curve in }Z\}.
\]

In this paper we aim to build from these approaches \cite{Casagrande2006,Fujita2019GeneralizedMukaiToric} by extending their ideas beyond the class of toric varieties. Here we prove \Cref{mukai conjecture} for a class of locally factorial Fano varieties \(X\) which embed into a smooth projective toric variety \(Z\) via their Cox ring. In this approach we look at a discrete approximation of \(\iota_{(Z,\Delta)}\) defined when \(-(K_X+\Delta)\) is a Cartier divisor
\[
i_{(Z,\Delta)}\coloneq\max\{k\in\ZZ_{>0}\mid kH=[-(K_Z+\Delta)],\,H\in\Pic(Z)\}\le\iota_{(Z,\Delta)}.
\]

Since smooth Fano varieties are Mori dream spaces \cite{BCHM2010}, we use the theory of \cite[Section~3]{CoxRingBook} (introduced here in \Cref{background section}) which embeds a Mori dream space \(X\) with specified Cox ring into a toric variety \(Z\). Furthermore, in this construction, when \(X\) is Fano we get a canonical closed embedding \(X\hookrightarrow Z\) into a projective toric variety. This embedding has the property that \(\Cl(X)\cong\Cl(Z)\) and under this isomorphism, in \Cref{setup} we introduce a class of Fano varieties with \([-K_X]=[-(K_Z+\Delta)]\), so \(i_X=i_{(Z,\Delta)}\). Our approach is motivated by the idea of proving the Mukai conjecture for \(X\) by inheriting it from the log version of the Mukai conjecture for \(Z\).

\begin{construction}\label{setup} We now define the class of Fano varieties studied in this paper. There are two steps to their construction.

\begin{enumerate}
    \item Let \(Z\) be a smooth toric variety, and \(\Delta\) a torus invariant \(\ZZ\)-Weil divisor such that \(-(K_Z+\Delta)\) is ample. Let its \(\Cl(Z)\)-graded Cox ring be \(\mathrm{Cox}(Z)\cong\CC[T_1,\dots,T_m]\).
    \item Let \(R\coloneq \mathrm{Cox}(Z)/\langle g_1,\dots,g_r\rangle\) be a factorial complete intersection ring such that \(\langle g_1,\dots,g_r\rangle\) is a \(\Cl(Z)\)-homogeneous ideal, and \(\sum_{j=1}^r\deg g_j=[\Delta]\). We require that the degrees of the relations are not `relatively small' in the sense that for \(j=1,\dots,r\),
\[
\deg(g_j)\notin\conv(0,\{\deg(T_i)\mid i=1,\dots,m\})\setminus\conv\{\deg(T_i)\mid i=1,\dots,m\}.
\]
    
\end{enumerate}

We also require that the group of homogeneous invertible elements of \(R\) is \(\CC^*\). By following \Cref{setup}, there is up to isomorphism a unique locally factorial Fano variety \(X\) with graded Cox ring \(\mathrm{Cox}(X)=R\).

\end{construction}

\begin{thm}\label{main theorem}
    The Mukai conjecture holds for any Fano variety from \Cref{setup}.
\end{thm}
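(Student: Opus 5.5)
We reduce \Cref{main theorem} to a log Mukai-type inequality on the toric ambient variety \(Z\), and then use the structure of \(R\) to move both the inequality and the equality case back to \(X\).

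\textbf{Step 1: identify the invariants of \(X\) with those of \((Z,\Delta)\).} By the bunched ring construction of \cite[Section~3]{CoxRingBook}, \(X\) is a closed subvariety of \(Z\), and restriction gives \(\Cl(Z)\cong\Cl(X)\). Since \(X\) and \(Z\) are locally factorial, this is also an isomorphism \(\Pic(Z)\cong\Pic(X)\), so \(\rho_X=\rho_Z\). Because \(R\) is a complete intersection, the anticanonical class of \(X\) is
\[
[-K_X]=\sum_{i=1}^m\deg T_i-\sum_{j=1}^r\deg g_j=[-K_Z]-[\Delta]=[-(K_Z+\Delta)].
\]
Hence \(i_X=i_{(Z,\Delta)}\). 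Moreover \(\dim X=\dim Z-r\).

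\textbf{Step 2: prove a log Mukai inequality for \((Z,\Delta)\), sharpened by \(r\).} The target is
\[
(i_{(Z,\Delta)}-1)\rho_Z\le \dim Z-r .
\]
I would follow Casagrande \cite{Casagrande2006} and Fujita \cite{Fujita2019GeneralizedMukaiToric}. Take the fan \(\Sigma\) of \(Z\). Following Casagrande, choose a maximal family of pairwise disjoint primitive collections \(P_1,\dots,P_s\) together with their primitive relations \(\sum_{v\in P_k}v=\cdots\). This produces \(\rho_Z\) independent classes of curves \(\gamma_k\) (or rational curves, in Fujita's approach). Each satisfies
\[
-(K_Z+\Delta)\cdot\gamma_k\le |P_k|-\Delta\cdot\gamma_k .
\]
Since \(-(K_Z+\Delta)\cdot\gamma_k\) is a positive multiple of \(i_{(Z,\Delta)}\), we get \(i_{(Z,\Delta)}\le |P_k|-\Delta\cdot\gamma_k\). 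Summing over \(k\) and using \(\sum_k|P_k|\le \dim Z+\rho_Z\) gives
\[
(i-1)\rho_Z\le \dim Z-\sum_k\Delta\cdot\gamma_k .
\]

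\textbf{Step 3 (main obstacle): bound the \(\Delta\)-term from below by \(r\).} Write \(\Delta\cdot\gamma_k=\sum_j\deg(g_j)\cdot\gamma_k\). The aim is to show \(\sum_k\deg(g_j)\cdot\gamma_k\ge 1\) for each \(j\). Negative contributions must be excluded, and at least one strictly positive contribution must be produced. This is where I would use the hypothesis that \(\deg g_j\) is not "relatively small", together with factoriality and the condition that the homogeneous units of \(R\) are \(\CC^*\). Here is why these should suffice:
\begin{itemize}
    \item Suppose \(\deg(g_j)\cdot\gamma_k<0\) for some \(k\). Then every monomial of \(g_j\) is divisible by variables \(T_i\) with \(\deg T_i\cdot\gamma_k<0\), and these are exactly the variables of the negative part of the primitive relation.
    \item That pushes \(\deg g_j\) into \(\conv(0,\deg T_i)\setminus\conv(\deg T_i)\), which is excluded, or else forces \(g_j\) to be reducible or some \(T_i\) to become a unit in \(R\). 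Either way this contradicts the hypotheses on \(R\).
    \item Finally, \(\deg g_j\neq 0\) lies in the effective cone, so pairing it with the full family gives total degree at least \(1\).
\end{itemize}
I expect this convex-geometric case analysis to be the hardest part. In particular, comparing the nonnegative combinations in \(\Cl(Z)\) with the \(\gamma_k\) is delicate.

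\textbf{Step 4: the equality case.} Combining Steps 1–3 gives \((i_X-1)\rho_X\le \dim Z-r=\dim X\). Suppose equality holds. Then every inequality used above is an equality:
\begin{itemize}
    \item each \(\deg(g_j)\cdot\gamma_k\) lies in \(\{0,1\}\) with total \(1\);
    \item the collections \(P_k\) cover all rays and have focus \(0\).
\end{itemize}
So \(Z\cong\prod\PP^{|P_k|-1}\), and each \(g_j\) is multilinear in one factor only. The relations are then linear in that factor's variables. Such a relation either cuts the factor down to a smaller projective space or violates the "not relatively small" or unit conditions. Hence \(X\cong(\PP^{i_X-1})^{\rho_X}\), and the converse direction is immediate.
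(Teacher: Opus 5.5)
Your Step 1 agrees with the paper, but Steps 2 and 3 have genuine gaps, and Step 3 is where the hypotheses on \(R\) have to do all the work.

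\textbf{Step 2.} You assert that a maximal family of pairwise disjoint primitive collections has \(\rho_Z\) members. This is false for smooth complete fans in general. Every primitive collection has at least two rays, so there are at most \((d+\rho_Z)/2\) disjoint ones. That is fewer than \(\rho_Z\) as soon as \(\rho_Z>d\): take any smooth toric surface with \(\rho_Z\ge 3\), and choose \(\Delta\) so that \(-(K_Z+\Delta)\) is twice an ample class. You give no argument that the hypotheses force enough disjoint collections. With \(s<\rho_Z\) collections, summing your inequalities does not produce \((i-1)\rho_Z\).

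The paper needs no such family. It takes an ample \(\QQ\)-divisor \(E=\sum_v a_vD_v\sim_\QQ -K_X\) on \(Z\) and lets \(C_\sigma\) be the vertices of its moment polytope. For every maximal cone \(\sigma\) and every ray \(v\notin\sigma\), smoothness and Gale duality show that \(\langle v,C_\sigma\rangle+a_v\) is a positive multiple of \(i_X\). Writing \(0\) as a convex combination of the \(C_\sigma\) and summing over all rays then gives \(i_X\rho_X\le\sum_v a_v\).

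\textbf{Step 3.} This step is not proved, and the mechanism you sketch does not work.
\begin{itemize}
\item If \(\deg(g_j)\cdot\gamma_k<0\), you only learn that every monomial of \(g_j\) contains a variable from the focus of \(P_k\). This says nothing about whether \(\deg(g_j)\) is relatively small.
\item When the focus has two or more rays, it does not make \(g_j\) reducible either: \(T_{w_1}A+T_{w_2}B\) can be irreducible.
\item Effectiveness of \(\deg(g_j)\) cannot exclude negative contributions, because primitive classes are not nef. On the Hirzebruch surface \(F_2\), the \((-2)\)-curve has intersection \(-2\) with the primitive class it represents.
\end{itemize}
The underlying problem is that the ``not relatively small'' condition concerns nonnegative lifts of \(\deg(g_j)\) to \((\QQ^m)^*\) and their coordinate sums. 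It says nothing directly about intersection numbers with curves.

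The missing idea is to absorb \(\Delta\) into the coefficients of the anticanonical divisor rather than into curve degrees. The paper lifts each \(\deg(g_j)\) to a vector \(y_{g_j}\) with entries in \([0,1)\) and coordinate sum at least \(1\), which is exactly what the hypothesis is designed to supply. This yields \(E=\sum_i a_iD_i\sim_\QQ -K_X\) with \(0<a_i\le 1\) and \(\sum_i a_i\le m-r=n+\rho_X\). Combined with the inequality above, this gives \((i_X-1)\rho_X\le n\).

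\textbf{Step 4.} Your equality analysis is built on equality in Steps 2 and 3, so it inherits both gaps.
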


The outline of this paper is as follows. In \Cref{background section}, we introduce the background on Mori dream spaces which we require. Next, in \Cref{proof section} we prove \Cref{main theorem}. Among the steps of this proof, in \Cref{Fano index} we bound the Fano index of \(X\) using the convex geometry of the moment polytope of \((Z,-(K_Z+\Delta))\), and in \Cref{inequality} we find that \(i_{(Z,\Delta)}\rho_Z\le\sum_{D}a_D\) is bounded by the sum of coefficients of a certain decomposition \(\sum_Da_D[D]=[-K_X]\). For example, in the toric case with \(X=Z\) and \(\Delta=0\), the decomposition obtained here is the decomposition of the standard toric anticanonical divisor into all the \(T\)-invariant prime divisors, giving \(\sum_Da_D=\dim Z+\rho_Z\), reproving the toric Mukai conjecture. In \Cref{low coefficient sum} we use our assumptions on the Cox ring in \Cref{setup} to bound the sum of coefficients appearing in such a decomposition of \([-K_X]\) by \(n+\rho_X\). Finally, in \Cref{equality} we classify the boundary case of the Mukai conjecture for our varieties.

\begin{remark}
We may rewrite our findings in the following way
\[
 n+\rho_X-i_X\rho_X\ge n+\rho_X-\sum_Da_D=\gamma(X,-K_X)\ge0,
\]
which is the \emph{complexity \(\gamma\) of a decomposition of a log pair}. This is roughly a quantity measuring how far certain log pairs are from a toric pair (defined in \cite{Geomcharoftoric}). We remark that the appearance of such a quantity has recently begun to emerge concretely in other special cases of the Mukai conjecture: in the case that the nef cone equals the effective cone and is a simplicial cone \cite{NefComplexity}, and in the generalised Mukai conjecture for \(\QQ\)-factorial spherical varieties \cite{SphericalGeneralisedMukai}.
\end{remark}
In this paper we work over the field of complex numbers.

\subsection{Acknowledgements} I would like to thank Johannes Hofscheier for his helpful comments on an earlier version of this document.

\section{Background}\label{background section}

We begin by introducing the elements of the theory of Cox rings used in this paper. For a detailed account please refer to \cite{CoxRingBook}, which acts as the reference for this section.

Let \(X\) be an irreducible, normal variety with finitely generated class group \(\Cl(X)\). Then the \emph{Cox ring} of \(X\) is the \(\Cl(X)\)-graded ring

\[
\mathrm{Cox}(X)\coloneq\bigoplus_{[D]\in\Cl(X)} H^0(X,\OO(D)),
\]
whose definition is made precise in \cite{CoxRingBook}. If we have further that \(\mathrm{Cox}(X)\) is finitely generated and \(X\) is projective, then \(X\) is called a \emph{Mori dream space}. In this case, we can fix a system of pairwise non-associated \(\Cl(X)\)-prime generators of \(\mathrm{Cox}(X)\) to obtain a presentation
\[
\mathrm{Cox}(X)=\frac{\CC[T_1,\dots,T_m]}{I}.
\]

It is an important fact that passing to the \(\Cl(X)\)-graded polynomial ring \(\CC[T_1,\dots,T_m]\) gives us the graded Cox ring of a toric variety, and that there are finitely many toric varieties sharing this graded polynomial Cox ring. Furthermore, each Mori dream space \(X\) sharing the Cox ring \(\mathrm{Cox}(X)\) admits a canonical embedding into such a toric variety \(Z\), with \(\Cl(X)\cong\Cl(Z)\), and moreover the isomorphism class of \(X\) is determined by the data of its graded Cox ring presentation and its canonical toric ambient embedding.

We now introduce this theory. We assume the reader is familiar with the well-known characterisation of toric varieties in terms of lattice fans. Firstly, let us fix some notation. For a \(d\)-dimensional toric variety \(Z\), let \(N\coloneq\Hom(\CC^*,(\CC^*)^d)\), and let \(\Sigma\) be its fan in \(N_\QQ\cong\QQ^d\). For any \(k=1,\dots,d\) write \(\Sigma^{(k)}\) for the set of \(k\)-dimensional cones in \(\Sigma\), for example \(\Sigma^{(1)}\) is the set of rays of \(\Sigma\). Next, for any subset of cones \(S\subseteq\Sigma\), we write \(\rho(S)\) for the set of primitive ray generators of the rays of the cones in \(S\). We always assume that \(\mathrm{span}_\QQ(\Sigma)=N_\QQ\). 

To begin, consider the following exact sequence of \(\ZZ\)-modules, where \(m=d+\rk\Cl(Z)\) is the number of torus invariant prime divisors \(\{D_1^Z,\dots,D_m^Z\}\) of \(Z\),
\[\begin{tikzcd}
	0 & L & \ZZ^m & N,
	\arrow[from=1-1, to=1-2]
	\arrow[from=1-2, to=1-3]
	\arrow["P", from=1-3, to=1-4]
\end{tikzcd}\]
and its dual exact sequence below,
\[\begin{tikzcd}
	0 & K & (\ZZ^m)^* & M & 0.
	\arrow[from=1-2, to=1-1]
	\arrow["Q"', from=1-3, to=1-2]
	\arrow[from=1-4, to=1-3]
	\arrow[from=1-5, to=1-4]\tag{\(\downarrow\)}
\end{tikzcd}\]

Let \(\{e_i\mid i=1,\dots,m\}\) be the standard basis of \(\ZZ^m\), and let the corresponding dual basis of \((\ZZ^m)^*\) be \(\{e_i^*\mid i=1,\dots,m\}\). Here, the \(i\)th coordinate corresponds to the \(i\)th torus invariant prime divisor \(D_i^Z\) in the following way: we have \(P\colon\ZZ^m\to M\) is a choice of toric generator matrix, with \(P(e_i)\) the coordinate of the \(i\)th ray of \(\Sigma\), and \(K\cong\Cl(Z)\), with \(Q(e_i^*)\) equal to the divisor class of \(D_i^Z\). We say the vector configuration \(\{Q(e_i^*)\mid i=1,\dots,m\}\) is \emph{Gale dual} to the vector configuration \(\{P(e_i)\mid i=1,\dots,m\}\). For shorthand, if \(v=P(e_i)\) we write \(v^\downarrow=Q(e_i^*)\) for its \emph{Gale dual}. Note that the Gale dual of a vector configuration is defined up to invertible change of coordinates, and the rational version is defined analogously.

We now introduce the Gale dual version of a lattice fan. Since each \(\sigma\in\Sigma\) is of the form \(\sigma=\cone(P(e_i)\mid i\in I(\sigma))\) for some \(I(\sigma)\subseteq\{1,\dots,m\}\), we may define the multi-set
\[
\sigma^\downarrow\coloneq (Q(e_i^*)\mid i\notin I(\sigma)),
\]
and the set \(\Sigma^\downarrow\coloneq\{\sigma^\downarrow\mid\sigma\in\Sigma\}\). Now \(\mathrm{Cox}(Z)=\CC[T_1,\dots,T_m]\) is graded by \(\Cl(Z)\) with \(\deg(T_i)=Q(e_i^*)\) for \(i=1,\dots,m\), and \((\mathrm{Cox}(Z),\Sigma^\downarrow)\) is called the \emph{bunched polynomial ring} of \(Z\), from which we can recover the fan \(\Sigma\) determining the isomorphism class of \(Z\). 

We now describe the Mori dream spaces \(X\) which are Fano in terms of \emph{bunched rings}. Suppose \(X\) has Cox ring presentation \(\CC[T_1,\dots,T_m]/I\). Then the movable cone of \(X\) is

\[
\mathrm{Mov}(R)\coloneq\bigcap_{j\in\{1,\dots,m\}}\cone(Q(e_i^*)\mid i\in \{1,\dots,m\}\setminus\{j\})\subset K_\QQ.
\]

Since \(X\) is Fano, it is polarised by \(\mathcal{L}\coloneq[-K_X]\in\mathrm{Mov}(R)^\circ\cap\Pic(X)_\QQ\). From this, the isomorphism class of \(X\) is determined by the \emph{bunched ring} \((\CC[T_1,\dots,T_m]/I,\Phi(\mathcal{L}))\), where
\[
\Phi(\mathcal{L})\coloneq\left\{\tau_J\coloneq(Q(e_j^*)\mid j\in J)\,\Big{|}\,\mathcal{L}\in\cone(\tau_J),\, \,\prod_{j\in J} T_j\notin\mathrm{rad}\langle \prod_{j\notin J} T_j\rangle\right\}.
\]

From this data we may also canonically obtain the bunched polynomial ring of a toric variety \(Z\) (as in \cite[Construction~3.2.5.7]{CoxRingBook}) as \((\CC[T_1,\dots,T_m],\Sigma(\mathcal{L})^\downarrow)\), where
\[
\Sigma(\mathcal{L})^\downarrow\coloneq\left\{\tau_J\coloneq(Q(e_j^*)\mid j\in J)\mid \mathcal{L}\in\cone(\tau_J)\right\}\supseteq\Phi(\mathcal{L}).
\]

We now state some important facts arising from this construction. Firstly, there is a closed embedding \(X\hookrightarrow Z\), called the \emph{canonical toric ambient embedding} of \((\CC[T_1,\dots,T_m]/I,\Phi(\mathcal{L}))\).

\begin{remark}
Remark that \(Z\) is a projective toric variety. In the non-Fano setting while there remains a canonical toric ambient embedding (as in \cite[Construction~3.2.5.3]{CoxRingBook}), it is not necessarily projective, yet it always admits a possibly non-canonical projective completion. Furthermore,  the theory of bunched rings can be extended to classify not necessarily complete varieties with finitely generated Cox ring (see \cite[Section~3.2]{CoxRingBook}).
\end{remark}

There is a finite set of prime divisors \(\{D_1,\dots, D_m\}\) in \(X\) such that \(D_i^Z\cap X=D_i\) for each \(i=1,\dots,m\). These divisors give us an isomorphism \(\Cl(Z)\cong\Cl(X)\) via \([D_i^Z]\mapsto [D_i]=\deg(T_i)\) \cite[Proposition~3.2.5.4]{CoxRingBook}.
Next, we have an explicit description of the Picard group \cite[Corollary~3.3.1.6]{CoxRingBook}
\[
\Pic(X)=\bigcap_{\tau\in\Phi(\mathcal{L})}\langle\tau\rangle_\ZZ,
\]
where \(\langle\cdot\rangle_\ZZ\) denotes the \(\ZZ\)-span. From now on we make the further assumption that \(\mathrm{Cox}(X)=\CC[T_1,\dots,T_m]/I\), where \(I=\langle g_1,\dots, g_r\rangle\) is a complete intersection. This property gives us an explicit anticanonical divisor class \cite[Proposition~3.3.3.2]{CoxRingBook}
\[
[-K_X]=\sum_{i=1}^m\deg(T_i)-\sum_{j=1}^{r}\deg(g_j).
\]
Next, \(X\) is locally factorial if and only if for all \(\tau\in\Phi(\mathcal{L})\), we have that \(\langle\tau\rangle_\ZZ\) generates the torsion free part of \(K\) \cite[Corollary~3.3.1.8]{CoxRingBook}. Finally, we end by collecting some basic quantities associated to \(X\).
\begin{itemize}
        \item \(\rho_X=\rho_Z=\rk\Cl(Z)\),
        \item \(m\) is the number of generators of \(\mathrm{Cox}(X)\),
        \item \(r\) is the number of relations of the defining ideal \(\langle g_1,\dots, g_r\rangle\) of \(\mathrm{Cox}(X)\),
        \item \(n+\rho_X=m-r\),
        \item \(d=m-\rho_Z\) is the dimension of the canonical ambient toric variety \(Z\).
\end{itemize}

We now recall some well-known elements of the theory of toric varieties (for a reference see \cite{toricvarieties}). Let \((Z,E)\) be a polarised toric variety, where \(E=\sum_{i=1}^ma_iD_i^Z\) with \(a_i>0\) for all \(i=1,\dots,m\) is an ample torus invariant \(\QQ\)-Cartier \(\QQ\)-divisor. Then the \emph{moment polytope} of \((Z,E)\) is the following compact intersection of half-spaces 
\[
\mathcal{P}(Z,E)\coloneq\bigcap_{i=1}^m\{y\in M_\QQ\mid\langle P(e_i),y\rangle+a_i\ge0\},
\]
where \(\langle\cdot,\cdot\rangle\colon N\times M\to \ZZ\) is the dual pairing. This polytope has normal fan \(\Sigma\). Let the vertices of \(\mathcal{P}(Z,E)\) be \(\{C_\sigma\mid \sigma\in\Sigma^{(d)}\}\), which we call the \emph{\(\QQ\)-Cartier data} of \(\mathcal{L}\); and which satisfy \(\langle P(e_i),C_\sigma\rangle+a_i\ge0\), with \(\langle P(e_i),C_\sigma\rangle+a_i=0\) if and only if \(P(e_i)\in\sigma\).

\begin{proposition}\label{fano} The varieties obtained in \Cref{setup} are Fano varieties with locally factorial singularities.

\end{proposition}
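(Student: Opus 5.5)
The idea is to realise \(X\) as the bunched ring variety \((R,\Phi(\mathcal{L}))\) polarised by \(\mathcal{L}\coloneq[-(K_Z+\Delta)]\), and then check two things: that \(\mathcal{L}\) is the anticanonical class of \(X\) and is ample, and that local factoriality is inherited from the smoothness of \(Z\).

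\textbf{Setting up the bunched ring.} The Cox ring \(R=\CC[T_1,\dots,T_m]/\langle g_1,\dots,g_r\rangle\) is factorial. Its homogeneous units are \(\CC^*\). Since \(R\) is a quotient of \(\mathrm{Cox}(Z)\), it carries the \(\Cl(Z)\)-grading with \(\deg T_i=[D_i^Z]\). I will first check that the \(T_i\) are pairwise non-associated \(\Cl(Z)\)-primes in \(R\). If this fails, I will pass to a minimal system of generators. I expect the degree condition in \Cref{setup} to rule out any \(g_j\) having a linear term \(T_i\). Indeed, such a term would force \(\deg g_j=\deg T_i\), and this lies in the convex hull of the \(\deg T_i\), so it is not literally excluded. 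I will need to handle this case separately, for instance by eliminating the variable.

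Next I take \(\mathcal{L}\coloneq[-(K_Z+\Delta)]=\sum_i\deg T_i-\sum_j\deg g_j\). By the complete intersection formula quoted in \Cref{background section}, \(\mathcal{L}=[-K_X]\) once \(X\) is constructed with this Cox ring. Since \(-(K_Z+\Delta)\) is ample on the smooth toric \(Z\), the class \(\mathcal{L}\) lies in the interior of \(\Nef(Z)\), which is contained in \(\mathrm{Mov}(Z)^\circ\). The movable cone of \(R\) is defined by the same generator degrees, so it is the same cone, and hence \(\mathcal{L}\in\mathrm{Mov}(R)^\circ\). By the construction recalled above, \((R,\Phi(\mathcal{L}))\) is then a bunched ring defining a projective variety \(X\). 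Its canonical toric ambient variety has fan \(\Sigma(\mathcal{L})^\downarrow\), which is exactly the fan of \(Z\), because the chamber of \(\mathcal{L}\) is the ample chamber of \(Z\). So \(X\hookrightarrow Z\) and \(\Cl(X)\cong\Cl(Z)\). Ampleness of \(-K_X\) then follows from the description of the ample cone of a bunched ring variety as \(\bigcap_{\tau\in\Phi(\mathcal{L})}\cone(\tau)^\circ\). Because \(\Phi(\mathcal{L})\subseteq\Sigma(\mathcal{L})^\downarrow\), this cone contains the ample cone of \(Z\), and \(\mathcal{L}\) lies in it. Hence \(X\) is Fano.

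\textbf{Local factoriality.} By the criterion quoted above, it suffices to show that each \(\tau\in\Phi(\mathcal{L})\) spans \(K\). This holds because every \(\tau\in\Phi(\mathcal{L})\subseteq\Sigma(\mathcal{L})^\downarrow\) is the Gale dual \(\sigma^\downarrow\) of a cone \(\sigma\) of the smooth fan \(\Sigma\). For a smooth cone \(\sigma\), the classes \(Q(e_i^*)\) with \(i\notin I(\sigma)\) generate \(\Cl(Z)=K\); this is exactly the statement that \(\Pic(Z)=\Cl(Z)\) locally around the orbit of \(\sigma\). So \(X\) is locally factorial. Uniqueness of \(X\) follows because the bunched ring data \((R,\Phi(\mathcal{L}))\) is determined once \(\mathcal{L}=[-K_X]\) is forced.

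\textbf{Main obstacle.} The delicate step is verifying that the \(T_i\) form a genuine system of pairwise non-associated \(\Cl\)-prime generators, so that \(R\) is presented exactly as in the theory, and that the condition \(\mathcal{L}\in\mathrm{Mov}(R)^\circ\) is not broken by degenerate relations. Here I would use the hypothesis on \(\deg g_j\): a relation of "relatively small" degree could make some \(T_i\) non-prime or redundant. I would try to show that the excluded region contains every degree where such degeneracy can occur.
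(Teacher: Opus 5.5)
Your overall route is the paper's. You realise $X$ via the bunched ring $(R,\Phi(\mathcal{L}))$ with $\mathcal{L}=[-(K_Z+\Delta)]$, identify $\mathcal{L}=[-K_X]$ by the complete intersection formula, and get local factoriality from $\Phi(\mathcal{L})\subseteq\Sigma(\mathcal{L})^\downarrow$ together with smoothness of $Z$. Your checks that $\mathcal{L}\in\mathrm{Mov}(R)^\circ$ and $\mathcal{L}\in\bigcap_{\tau\in\Phi(\mathcal{L})}\cone(\tau)^\circ$ are correct, and they spell out what the paper compresses into ``so $X$ is a Fano variety''.

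The gap is the step you leave open as the main obstacle, and neither of your proposed fixes works. The hypotheses of \Cref{setup} as stated do not force $T_1,\dots,T_m$ to be pairwise non-associated $\Cl(Z)$-primes, so no argument from the degree condition can succeed. For example, take $Z=\mathbb{F}_1$ with $\deg T_1=\deg T_3=(1,0)$, $\deg T_2=(0,1)$, $\deg T_4=(1,1)$, and set $\Delta=D_4^Z$, $g_1=T_4-T_1T_2$.
\begin{itemize}
\item $[-(K_Z+\Delta)]=(2,1)$ is ample.
\item $R\cong\CC[T_1,T_2,T_3]$ is factorial with homogeneous units $\CC^*$.
\item $\deg g_1=\deg T_4\in\conv\{\deg T_i\}$, so the degree condition holds.
\end{itemize}
Yet $T_4=T_1T_2$ in $R$. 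Passing to the minimal generators $T_1,T_2,T_3$ changes the degree configuration and hence the toric ambient variety. Here their moving cone is the ray through $(1,0)$, so $\mathcal{L}\notin\mathrm{Mov}(R)^\circ$. Indeed $V(g_1)\subseteq Z$ is a smooth rational curve with $\Cl\cong\ZZ\not\cong\Cl(Z)$.

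So what is missing is a hypothesis, not a lemma: the $T_i$ must form a Cox ring presentation as in \Cref{background section}. This is also what the paper's proof needs when it appeals to \cite[Theorem~3.4.1.11]{CoxRingBook} (factoriality of $R$, freeness of $\Cl(Z)$) for the bunched ring axioms. Concretely, it suffices that no $g_j$ has a linear term. Since $R_0=\CC$, every homogeneous non-unit lies in $\mathfrak{m}=\langle T_1,\dots,T_m\rangle$, while $T_i\notin\mathfrak{m}^2$. As $\Cl(Z)$ is torsion-free, divisors of homogeneous elements are homogeneous, so $T_i$ is irreducible and hence prime in the factorial ring $R$. Moreover $T_i=cT_j$ would put a linear form into the ideal, so the $T_i$ are pairwise non-associated.

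The remaining axioms are then automatic.
\begin{itemize}
\item The grading is factorial, since $R$ is factorial and $\Cl(Z)$ is torsion-free.
\item It is almost free: for each $i$, the degrees $\deg T_j$, $j\neq i$, contain $\sigma^\downarrow$ for a maximal cone $\sigma$ through the $i$th ray, and $\sigma^\downarrow$ generates $\Cl(Z)$ because $Z$ is smooth.
\item $\Phi(\mathcal{L})$ is true, because $\mathcal{L}\in\mathrm{Mov}(R)^\circ$ and the $T_i$ are pairwise non-associated primes.
\end{itemize}
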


\begin{proof} Since \(Z\) is smooth and complete, \(\Cl(Z)\cong\Cl(X)\cong\ZZ^{\rho_X}\) is free, so by \cite[Theorem~3.4.1.11]{CoxRingBook} \(R\) being factorial implies the required conditions in \cite[Definition~3.2.1.2]{CoxRingBook} to define a bunched ring. The varieties of \Cref{setup} have bunched ring \((\mathrm{Cox}(X),\Phi(\mathcal{L}))\) with \(\mathcal{L}=[-(K_Z+\Delta)]\). Under the isomorphism \(\Cl(Z)\cong\Cl(X)\) we may consider 
\[
\mathcal{L}=[-K_Z]-[\Delta]=\sum_{i=1}^m\deg(T_i)-\sum_{j=1}^r\deg(g_j)=[-K_X],
\]
so \(X\) is a Fano variety. In our construction, \(Z\) has bunched ring \((\mathrm{Cox}(Z),\Sigma(\mathcal{L})^\downarrow)\). Since \(Z\) is smooth, we have \(\langle\tau\rangle_\ZZ=\Cl(Z)\) for all \(\tau\in\Sigma(\mathcal{L})^\downarrow\). As \(\Phi(\mathcal{L})\subseteq\Sigma(\mathcal{L})^\downarrow\), we obtain that \(Z\) is locally factorial.
\end{proof}

In this paper it may be helpful to keep in mind the following description of Gale duality.

\begin{remark}\label{Gale duality}
    There is a linear relation of the form
    \[
    \sum_{i=1}^m \lambda_iP(e_i)=0
    \]
    if and only if there is a linear form \(\ell\in L_\QQ\) such that
    \[
    \ell(Q(e_i^*))=\lambda_i\quad\text{for all }i=1,\dots,m.
    \]
    Similarly, linear relations in the \(\{Q(e_i^*)\mid i=1,\dots,m\}\) induce linear forms in \(M_\QQ\).
\end{remark}

We provide some examples of \Cref{setup}.

\begin{example}\label{T-varieties} The complexity one \(T\)-varieties accept an explicit characterisation in terms of their Cox rings \cite[Section~3.4]{CoxRingBook}, which we use here without introduction. Let \(X\) be a locally factorial Fano complexity one \(T\)-variety. In the notation of \cite[Construction~3.4.3.1]{CoxRingBook}, when there are no boundary divisors (i.e. ``\(m=0\)'') we have that \(\QQ_{\ge0}[-K_X]=\QQ_{\ge0}[-K_Z]\), so the ambient toric variety \(Z\) of \(X\) is itself a Fano variety. So in this case, setting the columns of \(P\) to be the vertices of a smooth reflexive polytope gives rise to complexity 1 \(T\)-varieties which come from \Cref{setup}.
    
\end{example}

\begin{example} Let \(Z=\PP^3\times\PP^3\), so \(\mathrm{Cox}(X)=\CC[T_1,\dots, T_8]\) with grading \(\deg(T_{2i})=(1,0)\), \(\deg(T_{2i-1})=(0,1)\) for \(i=1,\dots,4\), and \([-K_Z]=(4,4)\). This example has \(\Nef(Z)=\Eff(Z)\), and it is easy to fulfil the requirements of \Cref{setup}. Two choices for the relations are for example \(g_1=T_1T_2+T_3T_4+T_5T_6+T_7T_8\), or the complete intersection with \(g_1=T_1T_2+T_3T_4+T_5T_6\) and \(g_2=-T_3T_4+T_5T_6+T_7T_8\) (which fits into \Cref{T-varieties}).
    
\end{example}

\section{The Mukai conjecture}\label{proof section}

In this section we prove \Cref{main theorem}. All results in this section are new. We begin by selecting a \(\QQ\)-representative of the anticanonical class in terms of the Cox ring divisors \(\{D_i\coloneq D_i^Z\cap X\mid i=1,\dots, m\}\).

\begin{proposition}\label{low coefficient sum} Let \(X\) be as in \Cref{setup}, then there exists an anticanonical representative of the form \(-K_X\sim_\QQ E\coloneq\sum_{i=1}^ma_iD_i>0\) with \(0< a_i\le1\) for all \(i=1,\dots,m\) and
\[
\sum_{i=1}^ma_i\le n+\rho_X.
\]
\end{proposition}

\begin{proof} We examine the points in \(A\coloneq\{y\in (\QQ^m)^*\mid Q(y)=[-K_X]\}\). Firstly, since
\[
[-K_X]=[-K_Z]-\sum_{j=1}^r\deg(g_j),
\]
and since \(\Nef(X)^\circ\subseteq\Eff(X)^\circ= Q((\QQ^m)^*_{>0})\), we have
\[
[-K_X]\in \Eff(X)^\circ\cap([-K_Z]-\Eff(X))=Q((0,1]^m).
\]
So there exists an anticanonical representative of the form \(\sum_{i=1}^ma_iD_i\sim_\QQ-K_X\) with \(0< a_i\le 1\) for all \(i=1,\dots,m\), which we construct by selecting \(y=\sum_{i=1}^m a_ie_i^*\) with \((a_1,\dots,a_m)\in(0,1]^m\), and \(Q(y)=Q(\sum_{i=1}^m a_ie_i^*)=\sum_{i=1}^ma_i\deg(T_i)\), and then taking the representatives \(D_i\in\deg(T_i)\) for \(i=1,\dots,m\).

We now show it is possible to choose an anticanonical representative of the above form with a small coefficient sum. Firstly, every point in \(A\) may be written as
\[
y=(1,\dots,1)-(y_{g_1}+\dots+y_{g_r}),
\]
where \(Q(y_{g_j})=\deg(g_j)\) for \(j=1,\dots,r\). We now consider the function \(h\colon(\QQ^m)^*\to\QQ\) defined by \(h(\lambda_1,\dots,\lambda_m)\coloneq\lambda_1+\dots+\lambda_m\), and we show that for each \(\ell=1,\dots,r\), we can find a representative \(y_{g_\ell}\) such that \(h(y_{g_\ell})\ge1\). We have that 
\[
[-K_Z]-\deg(g_\ell)\in [-K_Z]-\Eff(X),
\]
and
\[
[-K_Z]-\deg(g_\ell)=[-K_X]+\sum_{j=1,\,j\neq\ell}^m\deg(g_j)\in\Nef(X)^\circ+\Eff(X)\subseteq\Eff(X)^\circ.
\]
Therefore \([-K_Z]-\deg(g_{\ell})\in \Eff(X)^\circ\cap([-K_Z]-\Eff(X))=Q((0,1]^m)\), so there is a representative of \([-K_Z]-\deg(g_\ell)\) of the form \((\eta_1,\dots,\eta_m)\in(0,1]^m\). Now we choose the representative \((1,\dots,1)\) for \([-K_Z]\), which uniquely determines a representative \(y_{g_\ell}\in Q^{-1}(\deg(g_\ell))\), and it satisfies
\[
(1,\dots,1)-y_{g_j}=(\eta_1,\dots,\eta_m),
\]
that is \(y_{g_j}=(1-\eta_1,\dots,1-\eta_m)\in[0,1)^m\). Next, suppose that \(h(y_{g_\ell})<1\), then
\[
y_{g_\ell}\in\{x\in (\QQ^m)^*_{\ge0}\mid h(x)<1\}=\conv(\{0\}\cup\{e_i^*\mid i=1,\dots,m\})\setminus\conv\{e_i^*\mid i=1,\dots,m\}.
\]
Therefore
\[
Q(y_{g_\ell})=\deg(g_\ell)\in\conv(0,\{\deg(T_i)\mid i=1,\dots,m\})\setminus\conv\{\deg(T_i)\mid i=1,\dots,m\},
\]
which contradicts our assumption from \Cref{setup} that the degrees of the relations are `relatively small'. Therefore
\[
\sum_{i=1}^m a_i=h(y)= h(1,\dots,1)-(h(y_{g_1})+\dots+h(y_{g_{r}}))\le m-r=n+\rho_X.\qedhere
\]

\end{proof}

\begin{proposition}\label{Fano index} Let \(X\) be as in \Cref{setup}. For any anticanonical representative of the form \(-K_X\sim_\QQ E=\sum_{i=1}^ma_iD_i>0\) with \(0<a_i\le1\) for all \(i=1,\dots,m\), as constructed in \Cref{low coefficient sum} (which we may write in the equivalent notation \(E=\sum_{v\in\rho(\Sigma)}a_vD_v\)), we have for all \(\sigma\in\Sigma^{(d)}\) and all \(v\in\rho(\Sigma)\setminus\rho(\mathfrak{\sigma})\),
\[
i_X\le \langle v,C_\sigma\rangle+a_v,
\]
where \(C_\sigma\) is the \(\QQ\)-Cartier data of \(E\) on \(\sigma\).
\end{proposition}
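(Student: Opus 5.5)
The plan is to read off the coefficients of \([-K_X]\) in a \(\ZZ\)-basis of \(\Cl(Z)\cong\Cl(X)\) adapted to the maximal cone \(\sigma\). Then I would use the integrality of \([-K_X]/i_X\) in \(\Pic(X)\) to force each coefficient to be at least \(i_X\).

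First, fix \(\sigma\in\Sigma^{(d)}\). By \Cref{fano} and \Cref{setup}, \([E]=[-K_X]=\mathcal{L}=[-(K_Z+\Delta)]\) is ample on \(Z\). Hence \(E\), viewed on \(Z\) as \(\sum_v a_vD_v^Z\) with all \(a_v>0\), has moment polytope \(\mathcal{P}(Z,E)\) with normal fan \(\Sigma\), and \(C_\sigma\) is the vertex attached to \(\sigma\). Subtract the principal divisor of the character \(\chi^{C_\sigma}\) (rationally). This gives the linear equivalence
\[
E\sim_\QQ \sum_{v\in\rho(\Sigma)}\bigl(\langle v,C_\sigma\rangle+a_v\bigr)D_v^Z .
\]
Put \(b_v\coloneq\langle v,C_\sigma\rangle+a_v\). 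By the recalled property of the \(\QQ\)-Cartier data, \(b_v=0\) exactly when \(v\in\rho(\sigma)\), and \(b_v>0\) otherwise. So in \(\Cl(Z)_\QQ\) we get \([-K_X]=\sum_{v\notin\rho(\sigma)}b_v[D_v^Z]\) with every \(b_v>0\). Equivalently, by \Cref{Gale duality}, the vector \((b_v)_v\) is the unique point of \(A\) vanishing on the coordinates indexed by \(\sigma\).

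Next, use smoothness of \(Z\). Since \(\sigma\) is a smooth maximal cone, its rays form a \(\ZZ\)-basis of \(N\). By Gale duality, the complementary classes \(\{[D_v^Z]\mid v\notin\rho(\sigma)\}\) form a \(\ZZ\)-basis of \(\Cl(Z)\cong\ZZ^{\rho_Z}\); this is the fact \(\langle\tau\rangle_\ZZ=\Cl(Z)\) used in \Cref{fano}. Hence the \(b_v\) are the unique coordinates of \([-K_X]\) in this basis. Because \([-K_X]\) is an integral class, each \(b_v\) is an integer.

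Now write \([-K_X]=i_XH\) with \(H\in\Pic(X)\subseteq\Cl(X)\cong\Cl(Z)\), and expand \(H=\sum_{v\notin\rho(\sigma)}h_v[D_v^Z]\) with \(h_v\in\ZZ\). Uniqueness of coordinates gives \(b_v=i_Xh_v\) for every \(v\notin\rho(\sigma)\). Since \(b_v>0\) and \(i_X>0\), we get \(h_v\ge1\), and therefore \(i_X\le b_v=\langle v,C_\sigma\rangle+a_v\), as claimed.

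The main point to get right is the identification in the first step. One must check that the \(\QQ\)-divisor \(E\) (a priori on \(X\)) may be treated as an ample torus-invariant divisor on \(Z\) whose polytope vertices are exactly indexed by \(\Sigma^{(d)}\). This is needed to have strict positivity \(b_v>0\) off \(\sigma\). It follows from \([E]=[-(K_Z+\Delta)]\) being ample on \(Z\) together with \(a_v>0\) from \Cref{low coefficient sum}. I would verify that strict inequality carefully, since it is what upgrades \(h_v\ge0\) to \(h_v\ge1\).
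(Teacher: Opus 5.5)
Your proof is correct and is essentially the paper's argument. The paper obtains the same relation \([-K_X]=\sum_{v\notin\rho(\sigma)}(\langle v,C_\sigma\rangle+a_v)\,v^\downarrow\) through a homogenised vector configuration and Gale duality, where you subtract \(\mathrm{div}(\chi^{C_\sigma})\). Its integral forms \(\ell_{\sigma,v}\in L\) are exactly your coordinate functionals in the \(\ZZ\)-basis \(\sigma^\downarrow\) of \(\Cl(Z)\), so positivity together with divisibility by \(i_X\) gives \(i_X\le\langle v,C_\sigma\rangle+a_v\) in both versions.
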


\begin{proof}
    Since \(Z\) is smooth, we have \(\Pic(X)=\Cl(X)=\Cl(Z)=\Pic(Z)=\langle\tau\rangle_\ZZ\) for all \(\tau\in\Phi([-(K_Z+\Delta)])\). Therefore \([-K_X]\) may be viewed as a Cartier class on \(Z\), and \(E\) as a torus invariant \(\QQ\)-Cartier \(\QQ\)-divisor on \(Z\).  

    We now apply a weighted homogenisation operation to the rays of \(\Sigma\). It is defined by sending \(v\mapsto \overline{v}\coloneq(v,a_v)\) for all \(v\in\rho(\Sigma)\), and incorporating a new vector \(((0,\dots,0),1)\). That is, we consider the vector configuration
    \[
    \V\coloneq\{\overline{v}\mid v\in\rho(\Sigma)\}\cup\{((0,\dots,0),1)\}\subset N\times\QQ.
    \]
    Notice that the following vector configuration is Gale dual to \(\V\).
    \[
    \V^\downarrow=\{v^\downarrow\mid v\in\rho(\Sigma)\}\cup\{-\sum_{v\in\rho(\Sigma)}a_v v^\downarrow=[K_X]\}\subset K,
    \]
    with \(\overline{v}^\downarrow=v^\downarrow\) for \(v\in\rho(\Sigma)\), and \(((0,\dots,0),1)^\downarrow=[K_X]\).

    Since \(E\) is a torus invariant \(\QQ\)-Cartier \(\QQ\)-divisor, it has a \(\QQ\)-Cartier data, which we write as \(\{C_\sigma\mid\sigma\in\Sigma^{(d)}\}\). These vectors satisfy \(\langle v,C_\sigma\rangle+a_v=0\) for each \(v\in\rho(\sigma)\), and since \([-K_X]\) is ample on \(Z\), we have further that this \(\QQ\)-Cartier data equals the vertices of the moment polytope \(\mathcal{P}\coloneq\mathcal{P}(Z,E)\). Next, we look at the polar dual of \(\mathcal{P}\)
    \[
    \mathcal{P}^*\coloneq\bigcap_{\sigma\in\Sigma^{(d)}}\{x\in N_\QQ\mid\langle x,C_\sigma\rangle+1\ge0\},
    \]
    which has set of vertices \(\{v/a_v\mid v\in\rho(\Sigma)\}\), and which is defined as \(a_v>0\) for all \(v\in\rho(\Sigma)\). We now consider the cone over this polytope
    \[
    \mathcal{C}\coloneq\cone\{(v/a_v,1)\mid v\in\rho(\Sigma)\}=\cone\{\overline{v}\mid v\in\rho(\Sigma)\}\subseteq N_\QQ\oplus\QQ.
    \]
    We have that
    \[
    \mathcal{C}=\bigcap_{\sigma\in\Sigma^{(d)}}\{(x,y)\in N_\QQ\oplus\QQ\mid\langle (x,y),(C_\sigma,1)\rangle\ge0\}
    \]
    has its facets cut out by the inner normal vectors \(\{(\mathcal{C}_\sigma,1)\mid\sigma\in\Sigma^{(d)}\}\).
    Therefore for each \(\sigma\in\Sigma^{(d)}\), the linear form \((C_\sigma,1)\in\Hom(M_\QQ\oplus\QQ,\QQ)\) is non-negative on \(\mathcal{C}\), and on \(\V\) it attains the values \((C_\sigma,1)((0,\dots,0),1)=1\), and \((C_\sigma,1)(\overline{v})=\langle v,C_\sigma\rangle+a_v>0\) for \(v\in\rho(\Sigma)\setminus\rho(\sigma)\), and \((C_\sigma,1)(\overline{w})=0\) for each \(w\in\rho(\sigma)\). Therefore by Gale duality, this linear form induces the following linear relation in \(\V^\downarrow\),
    \[
    [K_X]+\sum_{v\in\rho(\Sigma)\setminus\rho(\sigma)}(\langle v,C_\sigma\rangle+a_v)v^\downarrow=0.\tag{\(\ast\)}\label{eqn}
    \]
    We now construct a linear form which extracts coefficients from equation \eqref{eqn}. Fix any \(v\in\rho(\Sigma)\setminus\rho(\sigma)\), then since \(\sigma\) is full dimensional,  we can find a linear relation
    \[
    v+\sum_{w\in\rho(\sigma)}\lambda_ww=0,\quad\lambda_w\in\QQ.
    \]
    By Gale duality on the rays of \(\Sigma\), the relation above induces a linear form \(\ell_{\sigma,v}\in L_\QQ\) such that \(\ell_{\sigma,v}(v^\downarrow)=1\), and \(\ell_{\sigma,v}(w^\downarrow)=\lambda_w\) for \(w\in\rho(\sigma)\), and \(\ell_{\sigma,v}(u^\downarrow)=0\) for all \(u\notin\rho(\sigma)\cup\{v\}\). 
    Therefore \(\ell_{\sigma,v}(\sigma^\downarrow)=\{0,1\}\subset\ZZ\), so \(\ell_{\sigma,v}\in\Hom(\langle\sigma^\downarrow\rangle_\ZZ,\ZZ)=\Cl(Z)^*=\Pic(Z)^*=L\). This means \(\ell_{\sigma,v}([-K_X])\in i_X\cdot\ZZ\). Now we apply \(\ell_{\sigma,v}\) to \eqref{eqn} to obtain
    \[
    0< \langle C_\sigma,v\rangle+a_v=\ell_{\sigma,v}([-K_X])\in i_X\cdot\ZZ.
    \]
    Therefore
    \[
    i_X\le \langle C_\sigma,v\rangle+a_v.\qedhere
    \]
\end{proof}

\begin{proposition}\label{inequality}
    For any \(X\) from \Cref{setup} we have
    \[
    (i_X-1)\rho_X\le n.
    \]
\end{proposition}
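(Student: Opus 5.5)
The plan is to sum the inequalities of \Cref{Fano index} over all rays outside a single, well-chosen maximal cone, and then compare with \Cref{low coefficient sum}. We fix the anticanonical representative \(E=\sum_{v\in\rho(\Sigma)}a_vD_v\) supplied by \Cref{low coefficient sum}. It satisfies \(0<a_v\le 1\) and \(\sum_v a_v\le n+\rho_X\). It therefore suffices to prove
\[
i_X\rho_X\le\sum_{v\in\rho(\Sigma)}a_v,
\]
since then \(i_X\rho_X\le n+\rho_X\), which rearranges to \((i_X-1)\rho_X\le n\).

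First we count rays. Because \(Z\) is smooth and complete, every \(\sigma\in\Sigma^{(d)}\) is a regular cone with exactly \(d\) rays. Hence \(|\rho(\Sigma)\setminus\rho(\sigma)|=m-d=\rho_Z=\rho_X\). Summing \Cref{Fano index} over \(v\in\rho(\Sigma)\setminus\rho(\sigma)\) gives, for every \(\sigma\in\Sigma^{(d)}\),
\[
i_X\rho_X\le\sum_{v\notin\rho(\sigma)}\bigl(\langle v,C_\sigma\rangle+a_v\bigr)=\sum_{v\in\rho(\Sigma)}\bigl(\langle v,C_\sigma\rangle+a_v\bigr)=\Bigl\langle \sum_{v\in\rho(\Sigma)}v,\,C_\sigma\Bigr\rangle+\sum_{v\in\rho(\Sigma)}a_v .
\]
The middle equality holds because \(\langle w,C_\sigma\rangle+a_w=0\) for every \(w\in\rho(\sigma)\).

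It remains to choose \(\sigma\) so that \(\langle u,C_\sigma\rangle\le 0\), where \(u\coloneq\sum_{v\in\rho(\Sigma)}v\in N\). This is the key step, and we handle it with the moment polytope \(\mathcal{P}=\mathcal{P}(Z,E)\). Since every \(a_v>0\), the origin satisfies \(\langle v,0\rangle+a_v>0\) for all \(v\), so \(0\in\mathcal{P}\) (in fact \(0\) is an interior point). The linear functional \(\langle u,\cdot\rangle\) attains its minimum over the compact polytope \(\mathcal{P}\) at some vertex. The vertices of \(\mathcal{P}\) are exactly the \(\QQ\)-Cartier data \(C_\sigma\), because \(E\) is ample on \(Z\), as recorded in the proof of \Cref{Fano index}. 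So the minimum is attained at some \(C_\sigma\), and
\[
\langle u,C_\sigma\rangle\le\langle u,0\rangle=0.
\]
For this \(\sigma\) the displayed bound gives \(i_X\rho_X\le\sum_v a_v\le n+\rho_X\), which proves the claim.

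The only delicate point I anticipate is checking that the identification of the vertices of \(\mathcal{P}\) with the \(C_\sigma\) is legitimate in this setting. This uses that \([-K_X]\) is Cartier and ample on \(Z\), i.e. that \(\mathcal{L}=[-(K_Z+\Delta)]\) lies in the interior of the ample cone of \(Z\); both facts were already used in \Cref{fano} and \Cref{Fano index}, so no new input should be needed. The bound \(a_v\le1\) plays no role here beyond its use in \Cref{low coefficient sum}.
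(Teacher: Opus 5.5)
Your proof is correct and is essentially the paper's argument. Both combine \Cref{Fano index}, the fact that each maximal cone omits exactly \(m-d=\rho_X\) rays, and \(0\in\mathcal{P}\), with the inequalities \(\langle v,C_\sigma\rangle+a_v\ge i_X\) playing the same role. The only difference is cosmetic: the paper writes \(0=\sum_\sigma m_\sigma C_\sigma\) as a convex combination and averages the bounds over all \(\sigma\), whereas you pick a single vertex \(C_\sigma\) minimising \(\langle\sum_v v,\cdot\rangle\), which is the extremal form of that same averaging and is slightly cleaner.
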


\begin{proof}

We fix an anticanonical representative \(-K_X\sim_\QQ E=\sum_{i=1}^ma_iD_i\) with \(0<a_i\le1\) for \(i=1,\dots,m\), as constructed in \Cref{low coefficient sum}. Now since \(\mathcal{P}=\conv\{C_\sigma\mid\sigma\in\Sigma^{(d)}\}\) is polar dual to \(\mathcal{P}^*\), we have \(0\in\mathcal{P}\), so there is a linear relation
\[
0=\sum_{\sigma\in \Sigma^{(d)}}m_\sigma C_\sigma,
\]
with \(\sum_{\sigma\in\Sigma^{(d)}}m_\sigma=1\), and \(0\le m_\sigma\le1\) for all \(\sigma\in\Sigma^{(d)}\). Now using \Cref{Fano index}, for any \(v\in\rho(\Sigma)\) we have
\begin{align*}
0=\sum_{\sigma\in \Sigma^{(d)}}m_\sigma\langle C_\sigma,v\rangle&\ge-\sum_{\sigma\in \Sigma^{(d)},\,v\in\rho(\sigma)}a_vm_\sigma+\sum_{\sigma\in \Sigma^{(d)},\,v\notin \rho(\sigma)}(i_X-a_v)m_\sigma,\\
&=(i_X-a_v)\sum_{\sigma\in \Sigma^{(d)}}m_\sigma-\left(i_X\cdot\sum_{\sigma\in \Sigma^{(d)},\,v\in\rho(\sigma)}m_\sigma\right).
\end{align*}
Rearranging this gives
\[
i_X-a_v=(i_X-a_v)\cdot\sum_{\sigma\in \Sigma^{(d)}}m_\sigma\le \left(i_X\cdot\sum_{\sigma\in \Sigma^{(d)},\,v\in\rho(\sigma)}m_\sigma\right).
\]
Now we take the sum of the left hand term above over all \(v\in\rho(\Sigma)\) to get
\begin{align*}
    i_X\cdot|\rho(\Sigma)|-\sum_{v\in\rho(\Sigma)}a_v&=\sum_{v\in\rho(\Sigma)}(i_X-a_v)\cdot\sum_{\sigma\in\Sigma^{(d)}}m_\sigma,\\
    &\le\sum_{v\in\rho(\Sigma)}\left(i_X\cdot\sum_{\sigma\in \Sigma^{(d)},\,v\in\rho(\sigma)}m_\sigma\right),\\
    &= i_X\cdot\sum_{\sigma\in \Sigma^{(d)}}\sum_{v\in\rho(\sigma)}m_\sigma,\\
    &=i_X\cdot\sum_{\sigma\in \Sigma^{(d)}}(m_\sigma\cdot|\rho(\sigma)|),\\
    &=i_X\cdot d,
\end{align*}
and from this we obtain

\[
    i_X\cdot(m-d)=i_X\cdot\rho_X\le\sum_{v\in\rho(\Sigma)}a_v.
\]
So by \Cref{low coefficient sum} we have
\[
(i_X-1)\rho_X\le n.\qedhere
\]
\end{proof}

\begin{proposition}\label{equality} Let \(X\) be from \Cref{setup} such that \((i_X-1)\rho_X=n\), then 

\[
X\cong(\PP^{i_X-1})^{\rho_X}.
\]
    
\end{proposition}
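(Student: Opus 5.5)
Assume \((i_X-1)\rho_X=n\) and fix the representative \(E=\sum_v a_vD_v\) from \Cref{low coefficient sum}. The plan is to show that every inequality in the proof of \Cref{inequality} becomes an equality, and then to read off both the combinatorics of \(\Sigma\) and the absence of relations. The proof of \Cref{inequality} gives the chain
\[
i_X\rho_X\le\sum_v a_v\le n+\rho_X .
\]
Equality in the conclusion forces both steps of this chain to be equalities.

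\emph{Step 1: there are no relations.} From \(\sum_v a_v=m-r\) we get \(\sum_j h(y_{g_j})=r\). Each \(h(y_{g_j})\ge1\), so each \(h(y_{g_j})=1\). I then plan to combine this with the first equality \(i_X\rho_X=\sum_v a_v\), which is a statement about \(Z\) alone, to show \(\Delta=0\). A cleaner route may be the following. In the equality case of the summed inequalities, every inequality \(\langle v,C_\sigma\rangle+a_v\ge i_X\) must be an equality whenever \(m_\sigma>0\) and \(v\notin\rho(\sigma)\). Hence on the support of \((m_\sigma)\) the values \(\ell_{\sigma,v}([-K_X])\) all equal \(i_X\). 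So \([-K_X]=i_X H\), where \(H\) is a class with coefficient \(1\) on each \(v\notin\rho(\sigma)\) in the basis \(\sigma^\downarrow\). Taking \(\ell_{\sigma,v}\) of \([-K_Z]\), one gets \([-K_Z]\) in the same \(\sigma^\downarrow\)-coordinates with entries \(1+\sum\lambda\)-type terms. Comparing these shows \(\deg g_j\) has nonnegative coordinates summing appropriately. The aim is then to show that \(y_{g_j}\) is a vertex \(e_i^*\), that is \(\deg g_j=\deg T_i\). But the generators \(T_i\) are pairwise non-associated \(\Cl\)-primes, and \(R\) is a complete intersection with \(\dim R=m-r\). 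A relation of degree exactly \(\deg T_i\) would have to be linear in the variables of that degree, which would make some generator redundant and contradict minimality of the presentation. Hence \(r=0\), \(X=Z\) is toric, and \(\Delta=0\). \textbf{This is the main obstacle}: turning \(h(y_{g_j})=1\) into \(\deg g_j\) being the degree of a single generator. The first thing I would try is to use that \(y_{g_j}\in[0,1)^m\) lies in the simplex \(h=1\), and to use the freedom in choosing \(y_{g_j}\), together with equality \(i_X\rho_X=\sum a_v\), to force all \(a_v=1\). Then \(y_{g_j}\) sums to zero, contradicting \(h(y_{g_j})=1\) unless \(r=0\).

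\emph{Step 2: the toric case.} Once \(r=0\), all \(a_v=1\) and \(X=Z\) is a smooth toric Fano variety with \((i_X-1)\rho_X=n\). I would then quote Casagrande's equality characterisation \cite{Casagrande2006}, or Fujita's \cite{Fujita2019GeneralizedMukaiToric}. Alternatively, one can argue directly from the equality case above: every \(\sigma\) with \(m_\sigma>0\) satisfies \(\langle v,C_\sigma\rangle+1=i_X\) for all \(v\notin\rho(\sigma)\). So \(-K_Z=i_XH\) with \(H\) restricting to degree one on the \(\rho_X\) complementary generators. The resulting relations on the rays split them into \(\rho_X\) groups of size \(i_X\), each summing to zero, which gives the fan of \((\PP^{i_X-1})^{\rho_X}\). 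This yields \(X\cong(\PP^{i_X-1})^{\rho_X}\).
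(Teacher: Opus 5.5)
Your Step 1 has a genuine gap, and it is exactly the step you flag as the main obstacle. Equality in \Cref{low coefficient sum} does give \(h(y_{g_j})=1\) with \(y_{g_j}\ge 0\). But this only says \(\deg g_j\in\conv\{\deg T_i\mid i=1,\dots,m\}\). For a general smooth projective toric \(Z\) this polytope contains lattice points that are not degrees of generators. For instance, for \(F_2\) the degrees are \((1,0),(1,0),(0,1),(2,1)\), and \((1,1)\) lies in their convex hull. Moreover, a relation of degree \(\deg T_i\) need not be linear: there \(T_4\) and \(T_1^2T_2\) have the same degree. Neither remedy you suggest closes this. Knowing \([-K_X]=i_X\sum_{v\notin\rho(\sigma)}v^\downarrow\) for a single \(\sigma\) with \(m_\sigma>0\) gives no control over the \(\sigma^\downarrow\)-coordinates of the \(w^\downarrow\), \(w\in\rho(\sigma)\). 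So the comparison with \([-K_Z]\) does not yield the nonnegativity you need, let alone \(\deg g_j=\deg T_i\). And ``forcing all \(a_v=1\)'' is essentially the conclusion itself, for which you give no mechanism.

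The missing idea is to run the Casagrande-type analysis of \(\Sigma\) \emph{before} knowing \(r=0\); it does not need \(r=0\), and this is how the paper argues. Since all \(a_v>0\), \(0\) is an interior point of \(\mathcal{P}\), so in the proof of \Cref{inequality} you may take every \(m_\sigma>0\). Hence \(\langle v,C_\sigma\rangle+a_v=i_X\) for \emph{all} \(\sigma\in\Sigma^{(d)}\) and all \(v\notin\rho(\sigma)\). Your caution about the support of \((m_\sigma)\) is justified, but you need all maximal cones, in particular all neighbours \(\sigma_i\) of a fixed \(\sigma\). Then \(C_{\sigma_i}-C_\sigma=i_Xw_i^*\), the \(a_v\) cancel, and every \(v\notin\rho(\sigma)\) equals \(-\sum_{i\in I(v)}w_i\), with the sets \(I(v)\) partitioning \(\{1,\dots,d\}\). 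So \(\Sigma\) has the rays of a product of projective spaces, and the \(\deg T_i\) run through a \(\ZZ\)-basis \(u_1,\dots,u_{\rho_X}\) of \(\Cl(Z)\). Now \(\conv\{u_1,\dots,u_{\rho_X}\}\) has no lattice points besides its vertices. Hence \(\deg g_j=u_\ell\), so \(g_j\) is linear and makes a generator redundant; thus \(r=0\).

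Alternatively, your route via \([-K_X]=i_XH\) now goes through. We have \([-K_X]=i_X\sum_\ell u_\ell\) and \([-K_Z]=\sum_\ell(|I(v_\ell)|+1)u_\ell\), so \([\Delta]\) has coordinate sum \(m-i_X\rho_X=r\). Each \(\deg g_j\) is a nonzero class in \(\ZZ^{\rho_X}_{\ge0}\), which forces \(\deg g_j=u_\ell\).

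With this reordering your Step 2 is fine. You can either quote \cite{Casagrande2006} for the smooth toric Fano \(Z\) with \(\Delta=0\), or read off \(|I(v_\ell)|=\langle v_\ell,C_\sigma\rangle=i_X-1\) from the same computation once \(a_v=1\).
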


\begin{proof} Our proof is inspired by the proof of \cite[Theorem~3(ii)]{Casagrande2006}. For equality to hold in the equation above, we must have equality in \Cref{Fano index}. That is, \(i_X=\langle v,C_\sigma\rangle+a_v\) for all \(\sigma\in\Sigma^{(d)}\) and all \(v\in\rho(\Sigma)\setminus\rho(\sigma)\). Now fix \(\sigma\in\Sigma^{(d)}\), so we have
\[
\langle v,C_\sigma\rangle=\begin{cases}-a_v&\text{if }v\in\rho(\sigma),\\ i_X-a_v&\text{otherwise.}\end{cases}
\]
Let \(\{w_1,\dots,w_{d}\}=\rho(\sigma)\), which by smoothness of \(Z\), form a \(\ZZ\)-basis of \(\ZZ^{d}\), so we have \(C_\sigma=-(a_{w_1}w_1^*+\dots+a_{w_{d}}w_{d}^*)\). Next, for any \(\sigma'\in\Sigma^{(d)}\) we find
\begin{align*}
C_{\sigma'}&=\sum_{w\in\rho(\sigma)}\langle w,C_{\sigma'}\rangle w^*,\\
&=-\sum_{w\in\rho(\sigma)\cap\rho(\sigma')}a_{w}w^*+\sum_{w\in\rho(\sigma)\setminus\rho(\sigma')}(i_X-a_w)w^*,\\
&=C_\sigma+i_X\cdot\sum_{w\in\rho(\sigma)\setminus\rho(\sigma')}w^*,
\end{align*}
and we now use this formula to determine the locations of all \(v\in\rho(\Sigma)\). Since \(\Sigma\) is a complete simplicial fan, for all \(i=1,\dots,d\) we can define a unique \(\sigma_i\in\Sigma^{(d)}\) such that \(\sigma\cap\sigma_i\) is a facet of \(\sigma\) and \(w_i\notin\rho(\sigma_i)\). Then for any \(v\in\{v_1,\dots,v_{\rho_X}\}\coloneq\rho(\Sigma)\setminus\{w_1,\dots,w_{d}\}\) we have
\[
\langle v,w_i^*\rangle=\langle v,C_{\sigma_i}-C_{\sigma}\rangle/{i_X}=(\langle v,C_{\sigma_i}\rangle-(i_X-a_v))/i_X=\begin{cases} -1&\text{if } v\in \rho(\sigma_i),\\0&\text{otherwise.}\end{cases}
\]
Therefore, in the coordinate system \(\{w_1,\dots,w_d\}\) we can write \(v=-\sum_{i\in I(v)}w_i\) where \(I(v)\coloneq\{i=1,\dots,d\mid v\in\rho(\sigma_i)\}\). Since \(\Sigma\) is simplicial, \(v\) is uniquely determined by the property that \(v\in\rho(\sigma_i)\) for any one of the \(i=1,\dots,d\), so we have \(I(v)\cap I(v')=\emptyset\) for \(v\neq v'\), so \(\{1,\dots,d\}=\bigsqcup_{\ell=1}^{\rho_X}I(v_\ell)\) is a partition. We have found that
\[
\rho(\Sigma)=\bigsqcup_{\ell=1}^{\rho_X}\left(\{w_i\mid i\in I(v_\ell)\}\cup\{-\sum_{i\in I(v_\ell)}w_i\}\right)=\rho(\Sigma(\PP^{|I(v_1)|}\times\dots\times\PP^{|I(v_{\rho_X})|})).
\]
That is, for \(\mathrm{Cox}(X)=\CC[T_1,\dots,T_m]/\langle g_1,\dots,g_r\rangle\), we have \(\CC[T_1,\dots, T_m]\) is the graded Cox ring of a product of projective spaces. So \(\{\deg(T_i)\mid i=1,\dots,m\}=\{u_1,\dots,u_{\rho_X}\}\) is a \(\ZZ\)-basis of \(\ZZ^{\rho_X}\). 

For the equality \((i_X-1)\rho_X=n\) to hold, we also require equality in \Cref{low coefficient sum}. Using the notation from the proof of \Cref{low coefficient sum}, this implies that for each \(j=1,\dots,r\) we have some \(y_{g_j}\in(\QQ^m)^*_{\ge0}\) with \(Q(y_{g_j})=\deg(g_j)\) and \(h(y_{g_j})=1\). Since for products of projective spaces, \(Q(\conv\{e_i\mid i=1,\dots,m\})=\conv\{u_1,\dots,u_{\rho_X}\}\), so \(\deg(g_j)=u_\ell\) for some \(\ell\in\{1,\dots,\rho_X\}\) which means \(g_j\) is a Cox ring generator. So \(\mathrm{Cox}(X)\) is a polynomial ring, and \(X=Z\) is a product of projective spaces. In \Cref{low coefficient sum} we find \(a_v=1\) for all \(v\in\rho(\Sigma)\), so we can take \(E\) to be the standard toric anticanonical divisor. Finally we require \(|I(v_\ell)|=\langle v_\ell,C_\sigma\rangle=i_X-1\) for all \(\ell=1,\dots,\rho_X\). We have proved
\[
X\cong(\PP^{i_X-1})^{\rho_X}.\qedhere
\]
\end{proof}
This completes the proof of the Mukai conjecture for the varieties of \Cref{setup}.

\begin{proof}[Proof of \Cref{main theorem}] This is immediate from \Cref{inequality} and \Cref{equality}. 
\end{proof}

\printbibliography{}

@book{CoxRingBook,
  title     = {Cox Rings},
  author    = {Arzhantsev, Ivan and Derenthal, Ulrich and Hausen, J\"urgen and Laface, Antonio},
  series    = {Cambridge Studies in Advanced Mathematics},
  volume    = {144},
  publisher = {Cambridge University Press},
  year      = {2014},
  doi       = {10.1017/CBO9781139175852},
  isbn      = {9781107024625}
}

@misc{NefComplexity,
  title        = {Characterization of products of projective spaces via nef complexity},
  author       = {Enwright, J. and Filipazzi, S. and Gongyo, Y. and Moraga, J. and Svaldi, R. and Wang, C. and Watanabe, K.},
  year         = {2025},
  eprint       = {2512.13637},
  archivePrefix= {arXiv},
  primaryClass = {math.AG},
  doi          = {10.48550/arXiv.2512.13637}
}

@misc{SphericalGeneralisedMukai,
  title        = {The generalised Mukai conjecture for spherical varieties},
  author       = {Gagliardi, Giuliano and Hofscheier, Johannes and Pearson, Heath},
  year         = {2025},
  eprint       = {2502.21155},
  archivePrefix= {arXiv},
  primaryClass = {math.AG},
  doi          = {10.48550/arXiv.2502.21155}
}

@article{Andreatta2004GeneralizedMukai,
  title   = {Generalized Mukai conjecture for special Fano varieties},
  author  = {Andreatta, Marco and Chierici, Elena and Occhetta, Gianluca},
  journal = {Central European Journal of Mathematics},
  volume  = {2},
  number  = {2},
  pages   = {214--225},
  year    = {2004}
}

@article{Bonavero2003ConjectureMukai,
  title   = {Sur une conjecture de Mukai},
  author  = {Bonavero, Laurent and Casagrande, Cinzia and Debarre, Olivier and Druel, St\'ephane},
  journal = {Commentarii Mathematici Helvetici},
  volume  = {78},
  number  = {3},
  pages   = {601--626},
  year    = {2003},
  doi     = {10.1007/s00014-003-0765-x}
}

@article{Fujita2019GeneralizedMukaiToric,
  title   = {The generalized Mukai conjecture for toric log Fano pairs},
  author  = {Fujita, Kento},
  journal = {European Journal of Mathematics},
  volume  = {5},
  number  = {3},
  pages   = {858--871},
  year    = {2019},
  doi     = {10.1007/s40879-018-0302-5}
}

@article{Fujita2016AroundMukai,
  title   = {Around the Mukai conjecture for Fano manifolds},
  author  = {Fujita, Kento},
  journal = {European Journal of Mathematics},
  volume  = {2},
  number  = {1},
  pages   = {120--139},
  year    = {2016},
  doi     = {10.1007/s40879-015-0045-5}
}

@article{Wisniewski1990Mukai,
  title   = {On a conjecture of Mukai},
  author  = {Wi{\'s}niewski, Jaros{\l}aw A.},
  journal = {Manuscripta Mathematica},
  volume  = {68},
  number  = {2},
  pages   = {135--141},
  year    = {1990},
  doi     = {10.1007/BF02568756}
}

@article{Reineke2024MukaiQuiver,
  title   = {The Mukai conjecture for Fano quiver moduli},
  author  = {Reineke, Markus},
  journal = {Algebra and Representation Theory},
  volume  = {27},
  number  = {4},
  pages   = {1641--1644},
  year    = {2024},
  doi     = {10.1007/s10468-024-10268-8}
}

@incollection{MukaiConjecture,
  title     = {Problems on characterization of the complex projective space},
  author    = {Mukai, Shigeru},
  booktitle = {Birational Geometry of Algebraic Varieties, Open Problems},
  series    = {Proceedings of the 23rd Symposium of the Taniguchi Foundation},
  year      = {1988},
  pages     = {57--60},
  address   = {Katata, Japan}
}

@article{Geomcharoftoric,
	author = {Brown, Morgan and McKernan, James and Svaldi, Roberto and Zong,Hong},
	doi = {10.1215/00127094-2017-0047},
	journal = {Duke Mathematical Journal},
	number = {5},
	pages = {923-968},
	title = {A geometric characterization of toric varieties},
	volume = {167},
	year = {2018}}

@book{toricvarieties,
  title     = {Toric Varieties},
  author    = {Cox, David A. and Little, John B. and Schenck, Hal K.},
  series    = {Graduate Studies in Mathematics},
  volume    = {124},
  year      = {2011},
  publisher = {American Mathematical Society},
  address   = {Providence, RI}
}

@article{Casagrande2006,
  author  = {Casagrande, Cinzia },
  title   = {The number of vertices of a {Fano} polytope},
  journal = {Annales de l'Institut Fourier},
  volume  = {56},
  number  = {1},
  pages   = {121--130},
  year    = {2006},
  doi     = {10.5802/aif.2175},
}

@article{BCHM2010,
  author  = {Birkar, Caucher and Cascini, Paolo and Hacon, Christopher D. and McKernan, James},
  title   = {Existence of minimal models for varieties of log general type},
  journal = {Journal of the American Mathematical Society},
  volume  = {23},
  number  = {2},
  pages   = {405--468},
  year    = {2010},
  doi     = {10.1090/S0894-0347-09-00649-3}
}

\end{document}